\newtheorem{theorem}{Theorem}
\theoremstyle{definition}
\newtheorem{definition}{Definition}
\numberwithin{equation}{section}
\begin{document}
\title[Fekete-Szeg\"{o} inequality of bi-starlike and bi-convex functions]{Fekete-Szeg\"{o} inequality of bi-starlike and bi-convex functions of order $b$ associated with symmetric $q$-derivative in conic domains}
\author{R. B. Sharma$^{1}$,\; K. Rajya Laxmi$^{2}$ and N. Magesh$^{\ast,\; 3}$}
\address{
$^{1}$Department of Mathematics,
Kakatiya University, Warangal-506 009,
Telangana, India.
\texttt{e-mail:} rbsharma005@gmail.com}
\address{
$^{2}$Department of Mathematics,
SRIIT, Hyderabad, India.
\texttt{e-mail:} rajyalaxmi2206@gmail.com}
\address{$^{\ast,\; 3}$Post-Graduate and Research Department of Mathematics
Government Arts College for Men, Krishnagiri - 635 001, Tamilnadu, India.
\texttt{e-mail:} nmagi$\_$2000@yahoo.co.in}
\thanks{$^{\ast}$ Corresponding author}
\subjclass[2010]{Primary 30C45; 30C50.}

\keywords{Analytic function, Bi-univalent function, Bi-starlike function, Bi-convex function, Conic domain, $q$-differential operator, Fekete-Szeg\"{o} inequality.}
\begin{abstract}
In this paper, two new subclasses of bi-univalent functions related to conic domains are defined by making use of  symmetric $q$-differential operator. The initial bounds for Fekete-Szeg\"{o} inequality for the functions $f$ in these classes are estimated.
\end{abstract}
\maketitle
\section{Introduction}
Let $ \mathfrak{\mathcal{A}} $  denotes the set of all functions which are analytic in the unit disc $ \Delta=\{z\in\mathbb{C}:|z|<1 \} $ with Taylor's series expansion of the form
\begin{align}
 f(z) = z + \sum \limits_{n=2}^{\infty} a_{n}z^{n}\label{1.1}
\end{align}
which are normalized by $f(0)=0, f'(0)=1$.\\
The subclass of $ \mathfrak{\mathcal{A}} $ consisting of all univalent functions is denoted by $ S $. That is
\begin{align}
S=\{f\in \mathfrak{\mathcal{A}}: f \text{ is univalent in }\Delta \}.\nonumber
\end{align}
A function $f\in \mathfrak{\mathcal{A}}$ is said to be a starlike function if
\begin{align}
\Re\Big(\dfrac{zf'(z)}{f(z)}\Big)>0,\;z\in\Delta.\nonumber
\end{align}
 A function $f\in \mathfrak{\mathcal{A}}$ is said to be a  convex function if
\begin{align}
\Re\Big(1+\dfrac{zf''(z)}{f'(z)}\Big)>0,\;z\in\Delta.\nonumber
\end{align}

\par Goodman \cite{good,goodc,goods} introduced the classes uniformly starlike and uniformly convex functions as subclasses of starlike and convex functions. A starlike function (or convex function) is said to be uniformly starlike (or uniformly convex) if the image of  every circular arc $\zeta$ contained in $\Delta$, with center at $\xi$ also in $\Delta$ is starlike (or convex) with respect to $f(\xi)$.   The class of uniformly starlike functions is represented by $UST$ and the class of uniformly convex functions is represented by $UCV$. The class of parabolic starlike functions is represented by $S_{p}.$ R\o nning \cite{ch350} and Ma-Minda \cite{ma,ma1}  independently gave the characterization for the classes $S_{p} $ and $UCV$ as follows.

\par A function $f\in \mathfrak{\mathcal{A}}$ is said to be in the class $S_{p}$ if and only if

\[\Re\Big(\dfrac{zf'(z)}{f(z)}\Big)>\Big|\dfrac{zf'(z)}{f(z)}-1\Big|,\;z\in \Delta.\nonumber\]

A function $f\in \mathfrak{\mathcal{A}}$ is said to be in the class $UCV$ if and only if
\[
\Re\left(1+\dfrac{zf''(z)}{f'(z)}\right)>\Big|\dfrac{zf''(z)}{f'(z)}\Big|,\;z\in \Delta.
\]

\par Also, it is clear that
$$f\in UCV \Leftrightarrow zf'(z)\in S_{p}.$$

Kanas and Wisniowska \cite{ch348,ch349} introduced $k$-uniformly starlike functions and $k$-uniformly convex functions as follows.
\begin{align}
k-ST=\Big\{f:\;f\in S \text{ and } \Re\Big(\dfrac{zf'(z)}{f(z)}\Big) >k\Big|\dfrac{zf'(z)}{f(z)}-1\Big|,\;z\in \Delta,\;k\geq0 \Big\}\nonumber\\
k-UCV=\Big\{f:\;f\in S \text{ and } \Re\Big(1+\dfrac{zf'(z)}{f(z)}\Big) >k\Big|\dfrac{zf''(z)}{f'(z)}\Big|,\;z\in \Delta, \;k\geq0 \Big\}.\nonumber
\end{align}
Bharati, et al. \cite{ch345}, defined $k-ST(\beta)$ and $k-UCV(\beta)$ as follows.
A function $f\in \mathfrak{\mathcal{A}}$ is said to be in the class $k-ST(\beta)$  if and only if
\begin{align}
\Re\Big(\dfrac{zf'(z)}{f(z)}\Big)-\beta >k\Big|\dfrac{zf'(z)}{f(z)}-1\Big|,\;z\in \Delta.\label{3.1}
\end{align}
A function $f\in \mathfrak{\mathcal{A}}$ is said to be in the class $k-UCV(\beta)$ if and only if
\begin{align}
\Re\Big(1+\dfrac{zf''(z)}{f'(z)}\Big)-\beta >k\Big|\dfrac{zf''(z)}{f'(z)}\Big|,\;z\in \Delta.\label{3.2}
\end{align}
Sim et al.\cite{sim}, generalized above classes and introduced $k-ST(\alpha,\;\beta)$ and $k-UCV(\alpha,\;\beta)$ as below.
\begin{definition}
	A function $f\in \mathfrak{\mathcal{A}}$ is said to be in the class $k-ST(\alpha,\;\beta)$  if and only if
	\begin{align}
	\Re\Big\{\dfrac{zf'(z)}{f(z)}\Big\}-\beta >k\Big|\dfrac{zf'(z)}{f(z)}-\alpha\Big|,\;z\in \Delta,\label{3.3}
	\end{align} where $0\leq\beta<\alpha\leq1$ and $k(1-\alpha)<1-\beta$.
\end{definition}
 \begin{definition}
 	A function $f\in \mathfrak{\mathcal{A}}$ is said to be in the class $k-UCV(\alpha,\; \beta)$if and only if
 	\begin{align}
 	\Re\Big(1+\dfrac{zf''(z)}{f'(z)}\Big)-\beta >k\Big|1+\dfrac{zf''(z)}{f'(z)}-\alpha\Big|,\;z\in \Delta.\label{3.4}
 	\end{align}
 	where $0\leq\beta<\alpha\leq1$ and $k(1-\alpha)<1-\beta$.
 \end{definition}
\par In particular, for $\alpha=1,\;\beta=0$ the classes $k-ST(\alpha,\; \beta)$ and $k-UCV(\alpha,\; \beta)$ reduces to $k-ST$ and $k-UCV$ respectively. Further, for $\alpha=1$ these classes coincides with the classes studied by Nishiwaki et al. \cite{ch346} and Shams et al. \cite{ch347}. In 2017, Annamalai et al. \cite{ch351}, obtained second Hankel determinant of analytic functions involving conic domains.
\
\\
\
\textbf{Geometric Interpretation:~} A function $f\in k-ST(\alpha,\; \beta) $ and $k-UCV(\alpha,\; \beta)$ if and only if $\dfrac{zf'(z)}{f(z)}$ and $1+\dfrac{zf''(z)}{f'(z)}$, respectively takes all the values in the conic domain $\Omega_{k,\;\alpha,\;\beta}$.
\begin{align}
\Omega_{k,\;\alpha,\;\beta}=\big\{\omega:\;\omega\in \mathbb{C}\text{ and } k|\omega-\alpha|<\Re(\omega)-\beta\big\}\nonumber
\end{align}
or
\begin{align}
\Omega_{k,\;\alpha,\;\beta}=\big\{\omega:\;\omega\in \mathbb{C}\text{ and } k\sqrt{[\Re(\omega)-\alpha]^{2}+[\Im(\omega)]^{2}}<\Re(\omega)-\beta\big\},\nonumber
\end{align}
where $0\leq\beta<\alpha\leq1$ and $k(1-\alpha)<1-\beta$. Clearly $1\in \Omega_{k,\;\alpha,\;\beta}$
 and $\Omega_{k,\;\alpha,\;\beta}$ is bounded by the curve
\begin{align}
\partial\Omega_{k,\;\alpha,\;\beta}=\big\{\omega:\;\omega=u+iv \text{ and }k^{2}(u-\alpha)^{2}+k^{2}v^{2}=(u-\beta)^{2}\big\}.\nonumber
\end{align}
\begin{definition}
	The Caratheodory functions $p\in P$ is said to be in the class $\mathcal{P}(p_{k,\;\alpha,\;\beta})$ if and only if $p$ takes all the values in the conic domain $\Omega_{k,\;\alpha,\;\beta}$. Analytically it is defined as follows:
	\begin{align}
	\mathcal{P}(p_{k,\;\alpha,\;\beta})&=\{p:\;p\in \mathcal{P} \text{ and } p(\Delta)\subset \Omega_{k,\;\alpha,\;\beta}\},\nonumber\\
	\mathcal{P}(p_{k,\;\alpha,\;\beta})&=\{p:\;p\in \mathcal{P} \text{ and } p(z)\prec p_{k,\;\alpha,\;\beta},\;z\in\Delta\}.\nonumber
	\end{align}
\end{definition}
Note that  $\partial\Omega_{k,\;\alpha,\;\beta}$ represents conic section about real axis. In particular, $\Omega_{k,\;\alpha,\;\beta}$ represents an elliptic domain for $k>1$, parabolic domain for for  $k=1$, hyperbolic domain for $0<k<1.$ Sim et al. \cite{sim} obtained the functions $p_{k,\alpha\;\beta}(z)$ which play the role of extremal functions  of $\mathcal{P}(p_{k,\;\alpha,\;\beta})$ as
\begin{align}
p_{k,\alpha\;\beta}(z)=\left\{
                         \begin{array}{ll}
                           \dfrac{1+(1-2\beta)z}{1-z}, & \text{for\;} k=0; \\
                            \alpha+\dfrac{2(\alpha-\beta)}{\pi^{2}}\text{log}^{2}\Big(\dfrac{1+\sqrt{u_{k}(z)}}{1-\sqrt{u_{k}(z)}}\Big), & \text{for\;} k=1;\\
                           \dfrac{(\alpha-\beta)}{1-k^{2}}\text{cosh}\{{\mathfrak{u}(k)\text{log}\Big(\dfrac{1+\sqrt{u_{k}(z)}}{1-\sqrt{u_{k}(z)}}\Big)}\}+\dfrac{\beta-\alpha k^{2}}{1-k^{2}}, & \text{for\;} 0<k<1; \\
                           \dfrac{(\alpha-\beta)}{k^{2}-1}\text{sin}^{2}(\dfrac{\pi}{2K(k)}\int_{0}^{\omega}\dfrac{dt}{\sqrt{1-t^{2}}\sqrt{1-t^{2}k^{2}}})+\dfrac{\alpha k^{2}-\beta}{k^{2}-1}, & \text{for\;} k>1;
                         \end{array}
                       \right.\nonumber
\end{align}
where $\mathfrak{u}(k)=\dfrac{2}{\pi}\cos^{-1}k,$
 $\;u_{k}(z)=\dfrac{z+\rho_{k}}{1+\rho_{k}z}$ and
\begin{align}
\rho_{k}=\left\{
           \begin{array}{ll}
             \big(\dfrac{e^{A}-1}{e^{A}+1}\big)^{2}, & \text{for\;} k=1; \\
             \Big(\dfrac{\text{exp}\big(\dfrac{1}{u_{k}(z)}\text{arc cosh B}\big)-1}{\text{exp}\Big(\dfrac{1}{u_{k}(z)}\text{arc cosh}B\Big)+1}\Big)^{2}, & \text{for\;} 0<k<1; \\
             \sqrt{k}\text{sin}\Big[   \dfrac{2K(\kappa)}{\pi}\text{arc sin}C\Big], & \text{for\;} k>1;
           \end{array}\nonumber
         \right.
\end{align}
with $A=\sqrt{\dfrac{1-\alpha}{2(\alpha-\beta)}\pi},  \;B=\dfrac{1}{\alpha-\beta}(1-k^{2}-\beta+\alpha k^{2}),  \; C=\dfrac{1}{\alpha-\beta}(k^{2}-1+\beta-\alpha k^{2})$.\\
Also
\begin{align}
K(\kappa)&=\int_{0}^{\omega}\dfrac{dt}{\sqrt{1-t^{2}}\sqrt{1-t^{2}\kappa^{2}}}\;\;(0<\kappa<1),\nonumber\\
K'(\kappa)&=K(\sqrt{1-\kappa^{2}})\;\;(0<\kappa<1),\nonumber\\
\kappa &=\text{cosh}\Big(\dfrac{\pi K'(\kappa)}{4K(\kappa)}\Big).\nonumber
\end{align}
According to Koebe's  $\dfrac{1}{4}$ theorem, every analytic and univalent function $f$ in $\Delta$ has an inverse $ f^{-1}$ and is defined as
 \begin{equation}
f^{-1}(f(z)) = z, (z\in\Delta),  f(f^{-1}(w))=w\Big(|w|<r_{0}(f); r_{0}(f)\geq\dfrac{1}{4}\Big).\nonumber
\end{equation}
Also the function $f^{-1}$ can be written as
\begin{equation}
 f^{-1}(w) = w-a_{2}w^{2}+(2a_{2}^{2}-a_{3})w^{3}-(5a_{2}^{3}-5a_{2}a_{3}+a_{4})w^{4}+\ldots .\label{1.5}
 \end{equation}
 \begin{definition}
 	A function $ f\in \mathfrak{\mathcal{A}}$ is said to be bi-univalent if  both $f$ and analytic extension of $  f^{-1}$ in $\Delta$ are univalent in $\Delta$.  The class of all bi-univalent  functions is denoted by $\Sigma$.
 	That is a function $f$ is said to be bi-univalent if and only if
 	\begin{enumerate}
 		\item $f$ is an analytic and univalent function in $\Delta$.
 		\item There exists an analytic and univalent function $g$ in $\Delta$ such that\\ $f(g(z))=g(f(z))=z$ in $\Delta$.
 	\end{enumerate}
 \end{definition}
  The class of bi-univalent functions was introduced by Lewin \cite{ch13} in 1967.  Recently many researchers (\cite{SA-SY-2016b}, \cite{Ali-Ravi-Ma-Mina-class}, \cite{Jay-NM-JY}, \cite{HO-NM-VKB-Fekete}, \cite{HO-NM-JY-Hankel}, \cite{r5}, \cite{HMS-Caglar}, \cite{HMS-SSE-RMA:FILO-2015},
\cite{HMS-SG-FG:AMS-2016}, \cite{HMS-SG-FG:AM-2017}, \cite{HMS-AKM-PG}, \cite{HMS-GMS-NM-GJM}, \cite{hhms}, \cite{Zaprawa}) have  introduced and investigated several interesting subclasses of
the bi-univalent functions and they have found non-sharp estimates of  two Taylor-Maclaurin coefficients $|a_{2}|$, $|a_{3}|$, Fekete-Szeg$\ddot{o}$ inequality  and second Hankel determinants.
 In 2017, $\c{S}$ahsene Altinkaya, Sibel Yal\c{c}in \cite{sahs}, \cite{SA-SY-GJM-2017} estimated the coefficients  and Fekete-Szeg$\ddot{o}$ inequality for some subclasses of bi-univalent functions involving symmetric $q$-derivative operator subordinate to the generating function of Chebyshev polynomial.
 \begin{definition}
 	\cite{ch343}
 	Jackson defined $q-$derivative operator $D_{q}$ of an analytic function $f$ of the form \eqref{1.1}as follows:
 	\begin{align}
 	D_{q}f(z)=\left\{
 	\begin{array}{ll}
 	\dfrac{f(qz)-f(z)}{(q-1)z}, & for\; z\neq0, \\
 	f'(0), & for\;z=0
 	\end{array}
 	\right.\nonumber\\
 	D_{q}f(0)=f'(0)\; \text{and} \;D^{2}_{q}=D_{q}(D_{q}f(z)).\nonumber
 	\end{align}
 	If $f(z)=z^{n}$ for any positive integer $n$, the $q$-derivative of $f(z)$ is defined by
 	\begin{align}
 	D_{q}z^{n}=\dfrac{(qz)^{n}-z^{n}}{qz-z}=[n]_{q}z^{n-1},\nonumber
 	\end{align}
 	where $[n]_{q}=\dfrac{q^{n}-1}{q-1}$. As $q\rightarrow1^{-}$ and $k\in\mathbb{N}$, we have $[n]_{q}\rightarrow n$ and $\lim_{q\rightarrow 1}(D_{q}f(z))=f'(z)$ where $f'$ is normal derivative of $f$.
 	Therefore \begin{align}
 	D_{q}f(z)=1+ \sum \limits_{n=2}^{\infty}[n]_{q} a_{n}z^{n-1}.\nonumber
 	\end{align}
 \end{definition}
 \begin{definition}
 	\cite{ch344}
 	The symmetric $q-$derivative operator $\widetilde{D}_{q}$ of an analytic function $f$ is defined as follows:
 	\begin{align}
 	(\widetilde{D}_{q}f)(z)=\left\{
 	\begin{array}{ll}
 	\dfrac{f(qz)-f(q^{-1}z)}{(q-q^{-1})z}, & \text{for}\; z\neq0, \\
 	f'(0), & \text{for}\;z=0
 	\end{array}.
 	\right.\nonumber
 	\end{align}
 \end{definition}
 It is clear that $\widetilde{D}_{q}z^{n}=\widetilde{[n]}_{q}z^{n-1}$ and $\widetilde{D_{q}f}(z)=1+ \sum \limits_{n=2}^{\infty}\widetilde{[n]}_{q} a_{n}z^{n-1}$, where $\widetilde{[n]}_{q}=\dfrac{q^{n}-q^{-n}}{q-q^{-1}}$.
 The relation between $q$-derivative operator and symmetric $q$-derivative operator is given by
 \begin{align}
 (\widetilde{D_{q}}f)(z)=D_{q^{2}}f(q^{-1}z).\nonumber
 \end{align}
 If $g$ is the inverse of $f$ then
 \begin{align}
 (\widetilde{D_{q}}g)(w)&=\dfrac{g(qw)-g(q^{-1}w)}{(q-q^{-1})w}\nonumber\\
 &=1-\widetilde{[2]_{q}}a_{2}w+\widetilde{[3]_{q}}(2a_{2}^{2}-a_{3})w^{2}-\widetilde{[4]_{q}}(5a_{2}^{3}-5a_{2}a_{3}+a_{4})w^{3}+\ldots.\nonumber
 \end{align}
 \par The $q$-calculus has so many applications in various branches of mathematics and physics. Jackson   \cite{ch343} developed $q$-integral and $q$-derivative in a systematic way. The fractional $q$-calculus is an important tool used to study various families of analytic functions. 
 In recent years, several subclasses of analytic functions involving fractional $q$-integral and fractional $q$-derivative operators were constructed and coefficient inequality, Fekete-Szeg\"{o} inequality  and Hankel determinant were estimated for the functions in these classes.
\par Motivated by the above mentioned work, in this paper, bi-starlike functions of order $b$ and bi-convex functions of order $b$ involving $q$-derivative operator subordinate to the conic domains are defined and the Fekete-Szeg$\ddot{o}$ inequality for the function in these classes are obtained.
 \begin{definition}
 	A function $f\in\Sigma$ is said to be  in the class $k-ST_{\Sigma, \;b}(\alpha,\;\beta);\text{ where } 0\leq\beta<\alpha\leq1$ and $k(1-\alpha)<1-\beta$,  and $b$ is a non-zero complex number, if it satisfies the following conditions:
 	\begin{align}
 	1+\dfrac{1}{b}\Big[\dfrac{z \widetilde{D}_{q}f(z)}{f(z)}-1\Big]\prec p_{k,\;\alpha,\;\beta}(z) \qquad \hbox{and} \qquad 
 	1+\dfrac{1}{b}\Big[\dfrac{w \widetilde{D}_{q}g(w)}{g(w)}-1\Big]\prec p_{k,\;\alpha,\;\beta}(w)\label{3.6}
 	\end{align}
 	where $g$ is an extension of $f^{-1}$ to $\Delta$.
 \end{definition}
 \begin{definition}
 	A function $f\in\Sigma$ is said to be  in the class $k-UCV_{\Sigma, \;b}(\alpha,\;\beta); \text{ where } 0\leq\beta<\alpha\leq1$ and $k(1-\alpha)<1-\beta$, and $b$ is a non-zero complex number,  if it satisfies the following conditions:
 	\begin{align}
 	1+\dfrac{1}{b}\Big[\dfrac{z \widetilde{D}_{q}(\widetilde{D}_{q}f(z))}{\widetilde{D}_{q}(f(z))}\Big]\prec p_{k,\;\alpha,\;\beta}(z)\qquad \hbox{and}\qquad
 	1+\dfrac{1}{b}\Big[\dfrac{w \widetilde{D}_{q}(\widetilde{D}_{q}g(w))}{\widetilde{D}_{q}(g(w))}\Big]\prec p_{k,\;\alpha,\;\beta}(w)\label{3.8}
 	\end{align}
 	where $g$ is an extension of $f^{-1}$ to $\Delta$.
 \end{definition}
\section{Main Results}
In this section, Fekete-Szeg\"{o} inequality for the functions in the $f$  classes  $k-ST_{\Sigma, \;b}(\alpha,\;\beta) $ and $k-UCV_{\Sigma, \;b}(\alpha,\;\beta) $  are estimated.
\begin{theorem}\label{th1}
	If $f\in k-ST_{\Sigma, \;b}(\alpha,\;\beta) $ and is of the form \eqref{1.1} then
\begin{align}
|a_{2}|&\leq\dfrac{P_{1}\sqrt{P_{1}}b^{2}}{\sqrt{[P_{1}^{2}b(\widetilde{[3]}_{q}-\widetilde{[2]}_{q})+2(P_{1}-P_{2})(\widetilde{[2]}_{q}-1)^{2}]}},\;\;
|a_{3}|&\leq\dfrac{b^{2}P_{1}^{2}}{(\widetilde{[2]}_{q}-1)^{2}}+\dfrac{bP_{1}}{(\widetilde{[3]}_{q}-1)}\nonumber
\end{align}
and
\begin{align}
|a_{3}-\mu a_{2}^{2}|&\leq\left\{
                           \begin{array}{ll}
                           \dfrac{P_{1}b}{(\widetilde{[3]}_{q}-1)}, & \text{if } 0\leq|s(\mu)|\leq1 \\
                              \dfrac{P_{1}b|s(\mu)|}{(\widetilde{[3]}_{q}-1)}& \text{if } |s(\mu)|\geq1,
                           \end{array}
                         \right.\nonumber
\end{align}
where
\begin{align}
s(\mu)=\dfrac{P_{1}^{2}b(1-\mu)}{4[P_{1}^{2}b(\widetilde{[3]}_{q}-\widetilde{[2]}_{q})+2(P_{1}-P_{2})(\widetilde{[2]}_{q}-1)^{2}]}.\nonumber
\end{align}
\end{theorem}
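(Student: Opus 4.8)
The plan is to convert the two subordination conditions defining $k-ST_{\Sigma,\;b}(\alpha,\;\beta)$ into coefficient identities by way of the Carath\'eodory class. Since $1+\frac1b\big[\frac{z\widetilde{D}_qf(z)}{f(z)}-1\big]\prec p_{k,\alpha,\beta}(z)$, there is a Schwarz function which I write as $\frac{p(z)-1}{p(z)+1}$ for some $p(z)=1+c_1z+c_2z^2+\cdots\in\mathcal P$, and likewise a function $\widetilde p(w)=1+d_1w+d_2w^2+\cdots\in\mathcal P$ attached to the condition on $g$. Writing $p_{k,\alpha,\beta}(z)=1+P_1z+P_2z^2+\cdots$ and expanding $\frac{p(z)-1}{p(z)+1}=\frac{c_1}{2}z+\big(\frac{c_2}{2}-\frac{c_1^2}{4}\big)z^2+\cdots$, the right-hand side of the first subordination becomes $1+\frac{P_1c_1}{2}z+\big(\frac{P_1c_2}{2}+\frac{(P_2-P_1)c_1^2}{4}\big)z^2+\cdots$, with the $d_j$-analogue on the $g$-side.

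First I would expand the left-hand sides. From $\widetilde{D}_qf(z)=1+\sum_{n\ge2}\widetilde{[n]}_qa_nz^{n-1}$ and division by $f$ one gets $\frac{z\widetilde{D}_qf(z)}{f(z)}=1+(\widetilde{[2]}_q-1)a_2z+\big[(\widetilde{[3]}_q-1)a_3-(\widetilde{[2]}_q-1)a_2^2\big]z^2+\cdots$, and, using the inverse series \eqref{1.5}, the $g$-side analogue is $1-(\widetilde{[2]}_q-1)a_2w+\big[(2\widetilde{[3]}_q-\widetilde{[2]}_q-1)a_2^2-(\widetilde{[3]}_q-1)a_3\big]w^2+\cdots$. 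Dividing by $b$ and adding $1$ yields the two functions to be matched against $p_{k,\alpha,\beta}$.

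Then I would equate coefficients. The first-order terms give $\frac{(\widetilde{[2]}_q-1)a_2}{b}=\frac{P_1c_1}{2}$ and $-\frac{(\widetilde{[2]}_q-1)a_2}{b}=\frac{P_1d_1}{2}$, so $c_1=-d_1$ and $|a_2|\le\frac{bP_1}{\widetilde{[2]}_q-1}$. Adding the two second-order relations kills $a_3$ and, after substituting $c_1^2=\frac{4(\widetilde{[2]}_q-1)^2a_2^2}{b^2P_1^2}$ from the first-order relation, gives a self-contained equation $a_2^2=\frac{b^2P_1^3(c_2+d_2)}{4D}$, where $D$ is the bracketed denominator of $s(\mu)$; bounding $|c_2+d_2|\le4$ delivers the stated estimate for $|a_2|$. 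Subtracting the two second-order relations (and using $c_1^2=d_1^2$) gives the clean identity $a_3-a_2^2=\frac{bP_1(c_2-d_2)}{4(\widetilde{[3]}_q-1)}$; combined with $|a_2|^2\le\frac{b^2P_1^2}{(\widetilde{[2]}_q-1)^2}$ and $|c_2-d_2|\le4$ this produces the bound for $|a_3|$.

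The crux, and the step I expect to be the main obstacle, is the Fekete-Szeg\"o functional. Writing $a_3-\mu a_2^2=(1-\mu)a_2^2+\frac{bP_1(c_2-d_2)}{4(\widetilde{[3]}_q-1)}$ and inserting the formula for $a_2^2$, I would regroup the result as $\frac{bP_1}{4}\big[(X+Y)c_2+(X-Y)d_2\big]$ with $Y=\frac{1}{\widetilde{[3]}_q-1}$ and $X$ proportional to $(1-\mu)bP_1^2/D$. Applying $|c_2|,|d_2|\le2$ then gives $|a_3-\mu a_2^2|\le\frac{bP_1}{2}\big(|X+Y|+|X-Y|\big)=bP_1\max(|X|,|Y|)$, and the elementary identity $|X+Y|+|X-Y|=2\max(|X|,|Y|)$ splits into exactly the two cases: when $|X|\le|Y|$, equivalently $|s(\mu)|\le1$, the maximum is $|Y|$ and the bound collapses to $\frac{bP_1}{\widetilde{[3]}_q-1}$, while for $|s(\mu)|\ge1$ it becomes the $|s(\mu)|$-proportional expression. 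The delicate bookkeeping lies in tracking the $q$-bracket factors $\widetilde{[2]}_q,\widetilde{[3]}_q$ and the signs from the inverse expansion so that the threshold emerges precisely as $|s(\mu)|=1$; the only nontrivial analytic input is the sharp Carath\'eodory bound $|c_n|\le2$.
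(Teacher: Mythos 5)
Your proposal follows essentially the same route as the paper's own proof: both convert the two subordinations into Carath\'eodory-type coefficient identities, deduce $c_1=-d_1$, obtain $a_2^2$ from the sum of the second-order relations and $a_3-a_2^2$ from their difference, and conclude by the bound $|c_n|\le 2$ together with the $\max$-type splitting that yields the two cases at the threshold $|s(\mu)|=1$. Apart from notation (your $c_i,d_i$ versus the paper's $h_i,q_i$, and your $X,Y$ regrouping in place of the paper's $s(\mu)$ factorization inside equation \eqref{3.14}), the two arguments coincide.
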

\begin{proof}
	Let $f\in k-ST_{\Sigma, \;b}(\alpha,\;\beta)$  and $g$ be  an analytic extension of $f^{-1}$ in $\Delta$. Then  there exist two Schwarz  functions $u, v $ in $\Delta $ such that
	\begin{align}
	1+\dfrac{1}{b}\Big[\dfrac{z \widetilde{D}_{q}f(z)}{f(z)}-1\Big]=P_{k,\;\alpha,\;\beta}(u(z)),\label{3.1}\\
	1+\dfrac{1}{b}\Big[\dfrac{w \widetilde{D}_{q}g(w)}{g(w)}-1\Big]=P_{k,\;\alpha,\;\beta}(v(w)).\label{3.2}
	\end{align}
	Define two functions $h$, $q \in P$  such that
	\begin{equation}
	h(z)=\dfrac{1+u(z)}{1-u(z)}=1+h_{1}z+h_{2}z^{2}+h_{3}z^{3}+\ldots\nonumber
	\end{equation}
	and
	\begin{equation}
	q(w)=\dfrac{1+v(w)}{1-v(w)}=1+q_{1}w+q_{2}w^{2}+q_{3}w^{3}+\ldots .\nonumber
	\end{equation}
	Then
	\begin{align}
	 P_{k,\;\alpha,\;\beta}\left(\dfrac{h(z)-1}{h(z)+1}\right)=&1+\dfrac{P_{1}h_{1}z}{2}+\left(\dfrac{P_{1}}{2}(h_{2}-\dfrac{h_{1}^{2}}{2})+\dfrac{P_{2}h_{1}^{2}}{4}\right)z^{2}\nonumber\\+&
	\left(\dfrac{P_{1}}{2}\left(\dfrac{h_{1}^{3}}{4}-h_{1}h_{2}+h_{3}\right)
	+\dfrac{P_{2}}{4}(2h_{1}h_{2}-h_{1}^{3})+\dfrac{P_{3}}{8}h_{1}^{3}\right)z^{3}+\ldots\nonumber\\
	 P_{k,\;\alpha,\;\beta}\left(\dfrac{v(w)-1}{v(w)+1}\right)=&1+\dfrac{P_{1}q_{1}w}{2}+\left(\dfrac{P_{1}}{2}(q_{2}-\dfrac{q_{1}^{2}}{2})+\dfrac{P_{2}q_{1}^{2}}{4}\right)w^{2}\nonumber\\+&
	\left(\dfrac{P_{1}}{2}\left(\dfrac{q_{1}^{3}}{4}-q_{1}q_{2}+q_{3}\right)
	+\dfrac{P_{2}}{4}(2q_{1}q_{2}-q_{1}^{3})+\dfrac{P_{3}}{8}q_{1}^{3}\right)w^{3}+\ldots .\nonumber
	\end{align}
	Then the above equations become                                                                                                                                                                           \begin{align}
	1+\dfrac{1}{b}\Big[\dfrac{z \widetilde{D}_{q}f(z)}{f(z)}-1\Big]=P_{k,\;\alpha,\;\beta}\left(\dfrac{h(z)-1}{h(z)+1}\right),\label{3.3}\\
	1+\dfrac{1}{b}\Big[\dfrac{w \widetilde{D}_{q}g(w)}{g(w)}-1\Big]=P_{k,\;\alpha,\;\beta}\left(\dfrac{v(w)-1}{v(w)+1}\right).\label{3.4}
	\end{align}
	Comparing the coefficients of similar powers of $z$ in equations \eqref{3.5} and \eqref{3.6}, we get
	\begin{align}
	\dfrac{1}{b}(\widetilde{[2]}_{q}-1)a_{2}&=\dfrac{P_{1}h_{1}}{2},\label{3.5}\\
	 \dfrac{1}{b}[(\widetilde{[3]}_{q}-1)a_{3}-(\widetilde{[2]}_{q}-1)a_{2}^{2}]&=\dfrac{P_{1}}{2}(h_{2}-\dfrac{h_{1}^{2}}{2})+\dfrac{P_{2}h_{1}^{2}}{4},\label{3.6}
	\end{align}
	and
	\begin{align}
	\dfrac{-1}{b}(\widetilde{[2]}_{q}-1)a_{2}&=\dfrac{P_{1}q_{1}}{2},\label{3.7}\\
	 \dfrac{1}{b}[(\widetilde{[3]}_{q}-1)(2a_{2}^{2}-a_{3})-(\widetilde{[2]}_{q}-1)a_{2}^{2}]&=\dfrac{P_{1}}{2}(q_{2}-\dfrac{q_{1}^{2}}{2})+\dfrac{P_{2}q_{1}^{2}}{4}.\label{3.8}
	\end{align}
	From the equations  \eqref{3.5} and \eqref{3.7}
	\begin{align}
	h_{1}=-q_{1}.\label{3.9}
	\end{align}
	Now squaring and adding the equations \eqref{3.5} from \eqref{3.7}, we get
	\begin{align}
	h_{1}^{2}+q_{1}^{2}=\dfrac{8(\widetilde{[2]}_{q}-1)^{2}a_{2}^{2}}{P_{1}^{2}b^{2}}.\label{3.10}
	\end{align}
	Now adding \eqref{3.6} and \eqref{3.8}, use the equation \eqref{3.10}, one can get
	\begin{align}
	 a_{2}^{2}=\dfrac{P_{1}^{3}(h_{2}+q_{2})b^{2}}{4[P_{1}^{2}b(\widetilde{[3]}_{q}-\widetilde{[2]}_{q})+2(P_{1}-P_{2})(\widetilde{[2]}_{q}-1)^{2}]}.\label{3.11}
	\end{align}
	Now subtract  the equation \eqref{3.8}  from \eqref{3.6},
	\begin{align}
	a_{3}= a_{2}^{2}+\dfrac{bP_{1}(h_{2}-q_{2})}{4(\widetilde{[3]}_{q}-1)}. \label{3.12}
	\end{align}
	Then using the equation \eqref{3.10}, we get
	\begin{align}
	a_{3}= \dfrac{P_{1}^{3}b^{2}(h_{1}^{2}+q_{1}^{2})}{8(\widetilde{[2]}_{q}-1)^{2}}+\dfrac{bP_{1}(h_{2}-q_{2})}{4(\widetilde{[3]}_{q}-1)}.\label{3.13}
	\end{align}
	Then using the equations \eqref{3.11} and \eqref{3.12}, we get
	\begin{align}
	a_{3}-\mu  a_{2}^{2}=\dfrac{bP_{1}}{4(\widetilde{[3]}_{q}-1)}\Big[h_{2}(1+s(\mu))+q_{2}(-1+s(\mu))\Big],\label{3.14}
	\end{align}
	where
	\begin{align}
	s(\mu)=\dfrac{P_{1}^{2}b(1-\mu)}{4[P_{1}^{2}b(\widetilde{[3]}_{q}-\widetilde{[2]}_{q})+2(P_{1}-P_{2})(\widetilde{[2]}_{q}-1)^{2}]}~.\nonumber
	\end{align}
	By applying the modulus for the equations \eqref{3.11}, \eqref{3.13} and \eqref{3.14}, we get the required results.
\end{proof}
\begin{theorem}\label{th2}
	If $f\in k-UCV_{\Sigma, \;b}(\alpha,\;\beta) $ and is of the form \eqref{1.1} then
	\begin{align}
	 |a_{2}|&\leq\dfrac{P_{1}\sqrt{P_{1}b}}{\sqrt{\Big[2\widetilde{[2]}_{q}(\widetilde{[3]}_{q}-\widetilde{[2]}_{q})bP_{1}^{2}+\widetilde{[2]}_{q}^{2}(P_{1}-P_{2})\Big]}}\; \hbox{and}\;
	|a_{3}|\leq\dfrac{P_{1}^{2}b^{2}}{\widetilde{[2]}_{q}^{2}}+\dfrac{bP_{1}}{\widetilde{[2]}_{q}\widetilde{[3]}_{q}}\nonumber
	\end{align}
	and
	\begin{align}
	|a_{3}-\mu a_{2}^{2}|&\leq\left\{
	\begin{array}{ll}
	\dfrac{P_{1}b}{\widetilde{[2]}_{q}\widetilde{[3]}_{q}}, & \text{if } 0\leq|s(\mu)|\leq 1 \\
	\dfrac{P_{1}b|s(\mu)|}{\widetilde{[2]}_{q}\widetilde{[3]}_{q}}& \text{if } |s(\mu)|\geq 1,
	\end{array}
	\right.\nonumber
	\end{align}
	where
	\begin{align}
	 s(\mu)=\dfrac{P_{1}^{2}b(1-\mu)}{4\Big[2\widetilde{[2]}_{q}(\widetilde{[3]}_{q}-\widetilde{[2]}_{q})bP_{1}^{2}+\widetilde{[2]}_{q}^{2}(P_{1}-P_{2})\Big]}~.\nonumber
	\end{align}
\end{theorem}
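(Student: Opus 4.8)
The plan is to follow the scheme of the proof of Theorem \ref{th1} verbatim, since the class $k-UCV_{\Sigma,\;b}(\alpha,\;\beta)$ differs from $k-ST_{\Sigma,\;b}(\alpha,\;\beta)$ only in the analytic expression that is subordinated to $P_{k,\;\alpha,\;\beta}$. First I would invoke the two defining subordinations of $k-UCV_{\Sigma,\;b}(\alpha,\;\beta)$ to produce Schwarz functions $u,v$ in $\Delta$ with
\begin{align}
1+\dfrac{1}{b}\Big[\dfrac{z \widetilde{D}_{q}(\widetilde{D}_{q}f(z))}{\widetilde{D}_{q}(f(z))}\Big]=P_{k,\;\alpha,\;\beta}(u(z)),\qquad 1+\dfrac{1}{b}\Big[\dfrac{w \widetilde{D}_{q}(\widetilde{D}_{q}g(w))}{\widetilde{D}_{q}(g(w))}\Big]=P_{k,\;\alpha,\;\beta}(v(w)),\nonumber
\end{align}
and then pass to the Carath\'{e}odory functions $h,q\in P$ via $h=(1+u)/(1-u)$, $q=(1+v)/(1-v)$ exactly as before, reusing the series expansions of $P_{k,\;\alpha,\;\beta}\big((h(z)-1)/(h(z)+1)\big)$ and $P_{k,\;\alpha,\;\beta}\big((v(w)-1)/(v(w)+1)\big)$ already recorded in the proof of Theorem \ref{th1}.

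The computational heart of the argument is the left-hand sides. Using $\widetilde{D}_{q}f(z)=1+\sum_{n\ge2}\widetilde{[n]}_{q}a_{n}z^{n-1}$ I would expand $\widetilde{D}_{q}(\widetilde{D}_{q}f(z))$, multiply by $z$, and divide by $\widetilde{D}_{q}f(z)$ to obtain
\begin{align}
\dfrac{z \widetilde{D}_{q}(\widetilde{D}_{q}f(z))}{\widetilde{D}_{q}(f(z))}=\widetilde{[2]}_{q}a_{2}z+\big(\widetilde{[2]}_{q}\widetilde{[3]}_{q}a_{3}-\widetilde{[2]}_{q}^{2}a_{2}^{2}\big)z^{2}+\cdots,\nonumber
\end{align}
together with the analogous expansion for $g$ coming from the inverse-function series \eqref{1.5}. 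Comparing the coefficients of $z,z^{2}$ and of $w,w^{2}$ then yields the two first-order relations
\begin{align}
\dfrac{\widetilde{[2]}_{q}}{b}a_{2}=\dfrac{P_{1}h_{1}}{2},\qquad -\dfrac{\widetilde{[2]}_{q}}{b}a_{2}=\dfrac{P_{1}q_{1}}{2},\nonumber
\end{align}
and the two second-order identities for $h_{2},q_{2}$. This is precisely where the convex case departs from Theorem \ref{th1}: the starlike weights $(\widetilde{[2]}_{q}-1)$ and $(\widetilde{[3]}_{q}-1)$ are replaced by $\widetilde{[2]}_{q}$ and $\widetilde{[2]}_{q}\widetilde{[3]}_{q}$, and the extra cross term $-\widetilde{[2]}_{q}^{2}a_{2}^{2}$ enters the second-order coefficient.

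From here the algebra is routine and parallels Theorem \ref{th1}. The first-order pair gives $h_{1}=-q_{1}$ and, after squaring and adding, $h_{1}^{2}+q_{1}^{2}=8\widetilde{[2]}_{q}^{2}a_{2}^{2}/(P_{1}^{2}b^{2})$. Adding the two second-order identities and substituting this eliminates $a_{3}$ and produces a closed form for $a_{2}^{2}$ with exactly the denominator displayed in the statement; subtracting them expresses $a_{3}-a_{2}^{2}$ through $h_{2}-q_{2}$. Forming $a_{3}-\mu a_{2}^{2}$ and collecting the coefficients of $h_{2}$ and $q_{2}$ introduces the quantity $s(\mu)$ of the statement. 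Finally I would apply the Carath\'{e}odory bounds $|h_{n}|\le2$, $|q_{n}|\le2$ to the resulting expressions for $a_{2}^{2}$, $a_{3}$ and $a_{3}-\mu a_{2}^{2}$, the last splitting according to whether $0\le|s(\mu)|\le1$ or $|s(\mu)|\ge1$, to read off the three claimed estimates.

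I expect the only real obstacle to be the bookkeeping in the series division $z\widetilde{D}_{q}(\widetilde{D}_{q}f)/\widetilde{D}_{q}f$ and in its inverse-function counterpart: the symmetric $q$-integers $\widetilde{[n]}_{q}$ must stay attached to the correct powers, and the $-\widetilde{[2]}_{q}^{2}a_{2}^{2}$ term must be generated correctly, since any slip there propagates into the denominator of $a_{2}^{2}$ and hence into $s(\mu)$. Everything after the coefficient comparison is formally identical to the starlike case.
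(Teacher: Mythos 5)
Your proposal is correct and follows essentially the same route as the paper's own proof: subordination realized through Schwarz functions, passage to the Carath\'eodory functions $h,q$, coefficient comparison giving the identities with weights $\widetilde{[2]}_{q}$ and $\widetilde{[2]}_{q}\widetilde{[3]}_{q}$ (including the cross term $-\widetilde{[2]}_{q}^{2}a_{2}^{2}$), then elimination of $a_{3}$, formation of $a_{3}-\mu a_{2}^{2}$ via $s(\mu)$, and the bounds $|h_{n}|\le 2$, $|q_{n}|\le 2$. In fact your relation $h_{1}^{2}+q_{1}^{2}=8\widetilde{[2]}_{q}^{2}a_{2}^{2}/(P_{1}^{2}b^{2})$ is the correct version of the paper's corresponding displayed equation, which carries a factor-of-two slip.
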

\begin{proof}
	If $f\in k-UCV_{\Sigma, \;b}(\alpha,\;\beta)$  and $g$ is an analytic extension of $f^{-1}$ in $\Delta$, then there exist two Schwarz  functions $u, v $ in $\Delta $ such that
	\begin{align}
	1+\dfrac{1}{b}\Big[\dfrac{z \widetilde{D}_{q}(\widetilde{D}_{q}f(z))}{\widetilde{D}_{q}(f(z))}\Big]=p_{k,\;\alpha,\;\beta}(u(z)),\label{3.15}\\
	1+\dfrac{1}{b}\Big[\dfrac{w \widetilde{D}_{q}(\widetilde{D}_{q}g(w))}{\widetilde{D}_{q}(g(w))}\Big]=p_{k,\;\alpha,\;\beta}(v(w)).\label{3.16}
	\end{align}
	Define two functions $h$, $q$ such that
	\begin{equation}
	h(z)=\dfrac{1+u(z)}{1-u(z)}=1+h_{1}z+h_{2}z^{2}+h_{3}z^{3}+\ldots\nonumber
	\end{equation}
	and
	\begin{equation}
	q(w)=\dfrac{1+v(w)}{1-v(w)}=1+q_{1}w+q_{2}w^{2}+q_{3}w^{3}+\ldots.\nonumber
	\end{equation}
	Then
	\begin{align}
	 P_{k,\;\alpha,\;\beta}\left(\dfrac{h(z)-1}{h(z)+1}\right)=&1+\dfrac{P_{1}h_{1}z}{2}+\left(\dfrac{P_{1}}{2}(h_{2}-\dfrac{h_{1}^{2}}{2})+\dfrac{P_{2}h_{1}^{2}}{4}\right)z^{2}\nonumber\\+&
	\left(\dfrac{P_{1}}{2}\left(\dfrac{h_{1}^{3}}{4}-h_{1}h_{2}+h_{3}\right)
	+\dfrac{P_{2}}{4}(2h_{1}h_{2}-h_{1}^{3})+\dfrac{P_{3}}{8}h_{1}^{3}\right)z^{3}+\ldots\nonumber\\
	 P_{k,\;\alpha,\;\beta}\left(\dfrac{v(w)-1}{v(w)+1}\right)=&1+\dfrac{P_{1}q_{1}w}{2}+\left(\dfrac{P_{1}}{2}(q_{2}-\dfrac{q_{1}^{2}}{2})+\dfrac{P_{2}q_{1}^{2}}{4}\right)w^{2}\nonumber\\+&
	\left(\dfrac{P_{1}}{2}\left(\dfrac{q_{1}^{3}}{4}-q_{1}q_{2}+q_{3}\right)
	+\dfrac{P_{2}}{4}(2q_{1}q_{2}-q_{1}^{3})+\dfrac{P_{3}}{8}q_{1}^{3}\right)w^{3}+\ldots\nonumber
	\end{align}
	Then the above equations  reduces to
	\begin{align}
	1+\dfrac{1}{b}\Big[\dfrac{z \widetilde{D}_{q}(\widetilde{D}_{q}f(z))}{\widetilde{D}_{q}(f(z))}\Big]=&P_{k,\;\alpha,\;\beta}\left(\dfrac{h(z)-1}{h(z)+1}\right),\label{3.17}\\
	1+\dfrac{1}{b}\Big[\dfrac{w \widetilde{D}_{q}(\widetilde{D}_{q}g(w))}{\widetilde{D}_{q}(g(w))}\Big]=&P_{k,\;\alpha,\;\beta}\left(\dfrac{v(w)-1}{v(w)+1}\right).\label{3.18}
	\end{align}
	Comparing the coefficients of similar powers of $z$ in equations \eqref{3.17} and \eqref{3.18}
	\begin{align}
	\dfrac{1}{b}\widetilde{[2]}_{q}a_{2}&=\dfrac{P_{1}h_{1}}{2},\label{3.19}\\
	 \dfrac{\widetilde{[2]}_{q}\widetilde{[3]}_{q}a_{3}-\widetilde{[2]}_{q}^{2}a_{2}^{2}}{b}&=\dfrac{P_{1}}{2}\big(h_{2}-\dfrac{h_{1}^{2}}{2}\big)+\dfrac{P_{2}h_{1}^{2}}{4},\label{3.20}
	\end{align}
	and
	\begin{align}
	\dfrac{-1}{b}\widetilde{[2]}_{q}a_{2}&=\dfrac{P_{1}q_{1}}{2}, \label{3.21}\\
	 \dfrac{1}{b}(\widetilde{[2]}_{q}\widetilde{[3]}_{q}(2a_{2}^{2}-a_{3})-\widetilde{[2]}_{q}^{2}a_{2}^{2})&=\dfrac{P_{1}}{2}\big(q_{2}-\dfrac{q_{1}^{2}}{2}\big)+\dfrac{P_{2}q_{1}^{2}}{4}.\label{3.22}
	\end{align}
	From the equations  \eqref{3.19} and \eqref{3.21}, we get
	\begin{align}
	h_{1}=-q_{1}.\label{3.23}
	\end{align}
	Squaring and adding the equations \eqref{3.19} from \eqref{3.21}, we get
	\begin{align}
	h_{1}^{2}+q_{1}^{2}=\dfrac{4(\widetilde{[2]}_{q})^{2}a_{2}^{2}}{P_{1}^{2}b^{2}}.\label{3.24}
	\end{align}
	Adding \eqref{3.20} and \eqref{3.22}, and using the equation \eqref{3.24}, one can get
	\begin{align}
	 a_{2}^{2}=\dfrac{P_{1}^{3}(h_{2}+q_{2})b^{2}}{4[2\widetilde{[2]}_{q}(\widetilde{[3]}_{q}-\widetilde{[2]}_{q})P_{1}^{2}+(\widetilde{[2]}_{q})^{2}(P_{1}-P_{2})]}.\label{3.25}
	\end{align}
	Subtracting  the equation \eqref{3.22}  from \eqref{3.20}, we get
	\begin{align}
	a_{3}= a_{2}^{2}+\dfrac{bP_{1}(h_{2}-q_{2})}{4(\widetilde{[2]}_{q}\widetilde{[3]}_{q})}. \label{3.26}
	\end{align}
	Using the equation \eqref{3.24}, we obtain
	\begin{align}
	a_{3}=\dfrac{P_{1}^{2}b^{2}(h_{1}^{2}+q_{1}^{2})}{8\widetilde{[2]}_{q}^{2}}+\dfrac{bP_{1}(h_{2}-q_{2})}{4(\widetilde{[2]}_{q}\widetilde{[3]}_{q})}. \label{3.27}
	\end{align}
	Then using the equations \eqref{3.25} and \eqref{3.26}, we get
	\begin{align}
	a_{3}-\mu  a_{2}^{2}=\dfrac{bP_{1}}{4(\widetilde{[2]}_{q}\widetilde{[3]}_{q})}\Big[h_{2}(1+s(\mu))+q_{2}(-1+s(\mu))\Big],\label{3.28}
	\end{align}
	where
	\begin{align}
	 s(\mu)=\dfrac{bP_{1}^{2}(1-\mu)}{4[2\widetilde{[2]}_{q}(\widetilde{[3]}_{q}-\widetilde{[2]}_{q})bP_{1}^{2}+\widetilde{[2]}_{q}^{2}(P_{1}-P_{2})]2}.\nonumber
	\end{align}
	By applying modulus for the equations   \eqref{3.25},  \eqref{3.27}  and \eqref{3.28} on both sides we get the required results.
\end{proof}
\
\textbf{Acknowledgement:}The work presented in this paper is partially supported by DST-FIST-Grant No.SR/FST/MSI-101/2014, dated 14/1/2016.


\begin{thebibliography}{[00]}
	\bibitem{SA-SY-2016b}
	\c S. Alt\i nkaya\ and\ S. Yal\c c\i n, Upper bound of second Hankel determinant for bi-Bazilevi\v{c} functions, Mediterr. J. Math. {\bf 13} (2016), no.~6, 4081--4090.
	
	\bibitem{sahs}
	\c S. Alt\i nkaya\ and\ S. Yal\c c\i n, Estimates on coefficients of a general subclass of bi-univalent functions associated with symmetric $q-$ derivative operator by means of the chebyshev polynomials,  \emph{Asia Pacific Journal of Mathematics}, Vol.4, no.2 (2017), pp. 90–99.
	
	\bibitem{SA-SY-GJM-2017} \c S. Alt\i nkaya\ and\ S. Yal\c c\i n, On the Chebyshev polynomial coefficient problem of some subclasses of bi-univalent functions, Gulf J. Math. {\bf 5} (2017), no.~3, 34--40.
	
	\bibitem{Ali-Ravi-Ma-Mina-class}
	R. M. Ali, S. K. Lee, V. Ravichandran, S. Supramanian, Coefficient estimates for bi-univalent Ma-Minda starlike and convex functions, Appl. Math. Lett. {\bf 25} (2012), no.~3, 344--351.
	
	\bibitem{ch351}  S. Annamalai, S. Sivasubramanian\ and\ C. Ramachandran, Hankel determinant for a class of analytic functions involving conical domains defined by subordination, Math. Slovaca {\bf 67} (2017), no.~4, 945--956.
	
	\bibitem {ch345} R. Bharati, R. Parvatham\ and\ A. Swaminathan, On subclasses of uniformly convex functions and corresponding class of starlike functions, Tamkang J. Math. {\bf 28} (1997), no.~1, 17--32.

	\bibitem {ch344} K. Brahim\ and\ Y. Sidomou, On some symmetric $q$-special functions, Matematiche (Catania) {\bf 68} (2013), no.~2, 107--122.


	\bibitem{good}  A. W. Goodman, {\it Univalent functions. Vol. I $\&$ II}, Mariner Publishing Co., Inc., Tampa, FL, 1983.

	\bibitem{goodc} A. W. Goodman, On uniformly convex functions, Ann. Polon. Math. {\bf 56} (1991), no.~1, 87--92.

	\bibitem{goods} A. W. Goodman, On uniformly starlike functions, J. Math. Anal. Appl. {\bf 155} (1991), no.~2, 364--370.

	\bibitem {ch343} F. H. Jackson, On $q-$functions and a certain difference operator, \emph{Trans. Royal Soc. Edinburgh}, \textbf{46} (1908), 253--281.

	\bibitem{Jay-NM-JY}
	J. M. Jahangiri, N. Magesh\ and\ J. Yamini, Fekete-Szeg\"{o} inequalities for classes of bi-starlike and bi-convex functions, Electron. J. Math. Anal. Appl. {\bf 3} (2015), no.~1, 133--140.


	
	\bibitem {ch349}
	S. Kanas\ and\ A. Wi\'{s}niowska, Conic domains and starlike functions, Rev. Roumaine Math. Pures Appl. {\bf 45} (2000), no.~4, 647--657.

	\bibitem {ch348}
	S. Kanas\ and\ A. Wisniowska, Conic regions and $k$-uniform convexity, J. Comput. Appl. Math. {\bf 105} (1999), no.~1-2, 327--336.

	\bibitem {ch13}
	M. Lewin, On a coefficient problem for bi-univalent functions, Proc. Amer. Math. Soc. {\bf 18} (1967), 63--68.

	\bibitem {ma}
	W. C. Ma\ and\ D. Minda, Uniformly convex functions, Ann. Polon. Math. {\bf 57} (1992), no.~2, 165--175.

	\bibitem {ma1}
	W. C. Ma\ and\ D. Minda, Uniformly convex functions. II, Ann. Polon. Math. {\bf 58} (1993), no.~3, 275--285.

	\bibitem {ch346}
	J. Nishiwaki\ and\ S. Owa, Certain classes of analytic functions concerned with uniformly starlike and convex functions, Appl. Math. Comput. {\bf 187} (2007), no.~1, 350--355.

	\bibitem{HO-NM-VKB-Fekete}
	H. Orhan, N. Magesh\ and\ V. K. Balaji, Fekete-Szeg\"{o} problem for certain classes of Ma-Minda bi-univalent functions, Afr. Mat. {\bf 27} (2016), no.~5-6, 889--897.

	\bibitem{HO-NM-JY-Hankel}
	H. Orhan, N. Magesh\ and\ J. Yamini, Bounds for the second Hankel determinant of certain bi-univalent functions, Turkish J. Math. {\bf 40} (2016), no.~3, 679--687.
	

	\bibitem {r5}
	R. K. Raina\ and\ J. Sok\'{o}\l, On coefficient estimates for a certain class of starlike functions, Hacet. J. Math. Stat. {\bf 44} (2015), no.~6, 1427--1433.

	\bibitem {ch350}
	F. R\o nning, On starlike functions associated with parabolic regions, Ann. Univ. Mariae Curie-Sk\l odowska Sect. A {\bf 45} (1991), 117--122.


	\bibitem {ch347}
	S. Shams, S. R. Kulkarni\ and\ J. M. Jahangiri, Classes of uniformly starlike and convex functions, Int. J. Math. Math. Sci. {\bf 2004}, no.~53-56, 2959--2961.

	\bibitem{sim}
	Y. J. Sim,  O. S. Kwon,  N. E. Cho, H. M. Srivastava, Some classes of analytic functions associated with conic regions, Taiwanese J. Math. {\bf 16} (2012), no.~1, 387--408.

	\bibitem{HMS-Caglar} H. M. Srivastava, S. Bulut, M. \c{C}a\u{g}lar, N. Ya\u{g}%
	mur, Coefficient estimates for a general subclass of analytic and bi-univalent functions, Filomat {\bf 27} (2013), no.~5, 831--842.

	\bibitem{HMS-SSE-RMA:FILO-2015}
	H. M. Srivastava, S. S. Eker\ and\ R. M. Ali, Coefficient bounds for a certain class of analytic and bi-univalent functions, Filomat {\bf 29} (2015), no.~8, 1839--1845.

	\bibitem{HMS-SG-FG:AMS-2016}
	H. M. Srivastava, S. Gaboury\ and\ F. Ghanim, Initial coefficient estimates for some subclasses of $m$-fold symmetric bi-univalent functions, Acta Math. Sci. Ser. B (Engl. Ed.) {\bf 36} (2016), no.~3, 863--871.

	\bibitem{HMS-SG-FG:AM-2017}
	H. M. Srivastava, S. Gaboury\ and\ F. Ghanim, Coefficient estimates for some general subclasses of analytic and bi-univalent functions, Afr. Mat. {\bf 28} (2017), no.~5-6, 693--706.

	\bibitem{HMS-AKM-PG}
	H. M. Srivastava, A. K. Mishra\ and\ P. Gochhayat, Certain subclasses of analytic and bi-univalent functions, Appl. Math. Lett. {\bf 23} (2010), no.~10, 1188--1192.

	\bibitem{HMS-GMS-NM-GJM}
	H. M. Srivastava, G. Murugusundaramoorthy, N. 	Magesh, On certain subclasses of bi-univalent functions associated with 	Hohlov operator. Global J. Math. Anal. \textbf{1 } (2013), no.~2,  67--73.
	
	\bibitem{hhms}
	H. Tang, H. M. Srivastava, S. Sivasubramanian and  P. Gurusamy,
	The Fekete-Szeg\"{o} functional problems for some subclasses of $m$-fold symmetric bi-univalent functions, J. Math. Inequal. {\bf 10} (2016), no.~4, 1063--1092.
	\bibitem{Zaprawa}
	P. Zaprawa, On the Fekete-Szeg\"{o} problem for classes of bi-univalent functions, Bull. Belg. Math. Soc. Simon Stevin {\bf 21} (2014), no.~1, 169--178.
	
\end{thebibliography}
\end{document}